\documentclass[11pt]{amsart}
\usepackage{amsmath, amsthm, mathabx,amscd, amsfonts, amssymb, hyperref, mathrsfs, textcomp, epsfig, graphicx, a4wide, IEEEtrantools, tikz, verbatim, xypic }

\usepackage[all,cmtip]{xy}
\usetikzlibrary{matrix, decorations.pathreplacing,angles,quotes,cd}

\newtheorem{thm}{Theorem}
\newtheorem*{question}{Question}

\newtheorem{lemma}[thm]{Lemma}

\theoremstyle{definition}

\usepackage{mathtools}

     \RequirePackage{rotating}                   
    \def\HMt{%
       \setbox0=\hbox{$\widehat{\mathit{HM}}$}
       \setbox1=\hbox{$\mathit{HM}$}
       \dimen0=1.1\ht0
       \advance\dimen0 by 1.17\ht1
       \smash{\mskip2mu\raise\dimen0\rlap{%
          \begin{turn}{180}
              {$\widehat{\phantom{\mathit{HM}}}$}
           \end{turn}} \mskip-2mu    
                \mathit{HM}
    }{\vphantom{\widehat{\mathit{HM}}}}{}}

\theoremstyle{remark}
\newtheorem*{remark}{Remark}

\begin{document}
\title{Divergence-free framings of three-manifolds via eigenspinors}

\author{Francesco Lin}
\address{Department of Mathematics, Columbia University} 
\email{flin@math.columbia.edu}

\maketitle

 {\centering\textit{Dedicated to Paolo Lisca in the occasion of his 60th birthday.}\par}

\begin{abstract}Gromov used convex integration to prove that any closed orientable three-manifold equipped with a volume form admits three divergence-free vector fields which are linearly independent at every point. We provide an alternative proof of this (inspired by Seiberg-Witten theory) using geometric properties of eigenspinors in three dimensions. In fact, our proof shows that for any Riemannian metric, one can find three divergence-free vector fields such that at every point they are orthogonal and have the same non-zero length.
\end{abstract}

\vspace{0.5cm}

The following classical result of Stiefel is fundamental in three-manifold topology.
\begin{thm}[\cite{Sti}]\label{parallelizable} Every closed orientable three-manifold $Y$ admits a framing, i.e. three vector fields $X_1,X_2$ and $X_3$ which are linearly independent at every point.
\end{thm}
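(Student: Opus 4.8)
The plan is to show directly that $TY$ is a trivial bundle, via obstruction theory, the point being that the only potential obstruction to a global framing is the second Stiefel--Whitney class, which must vanish for purely dimensional reasons.

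First I would fix a smooth triangulation of $Y$ (equivalently, a finite CW structure of dimension $3$) together with a Riemannian metric. Since $Y$ is oriented, the bundle of oriented orthonormal frames of $TY$ is a principal $SO(3)$-bundle $F \to Y$, and a framing as in the statement is exactly a section of $F$. Building such a section skeleton by skeleton, the obstruction to extending over the $k$-skeleton lies in $H^{k}(Y;\pi_{k-1}(SO(3)))$ for $k=1,2,3$. Because $SO(3)$ is connected, the obstruction for $k=1$ vanishes; because $\pi_{2}(SO(3))=0$ (as for any Lie group), the obstruction for $k=3$ vanishes; so the entire problem reduces to the primary obstruction in $H^{2}(Y;\pi_{1}(SO(3)))=H^{2}(Y;\Z/2)$, which is precisely $w_{2}(TY)$.

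It remains to check that $w_{2}(TY)=0$, and this is the only real content of the argument. Here I would invoke Wu's formula. Write $v=1+v_{1}+v_{2}+\cdots$ for the total Wu class of $Y$, characterized by $\langle v_{i}\cup x,[Y]\rangle=\langle \mathrm{Sq}^{i}x,[Y]\rangle$ for all $x\in H^{3-i}(Y;\Z/2)$; then the total Stiefel--Whitney class is $w(TY)=\mathrm{Sq}(v)$. Since $\dim Y=3$, the operation $\mathrm{Sq}^{i}$ annihilates $H^{3-i}(Y;\Z/2)$ as soon as $i>3-i$, i.e.\ for $i\geq 2$, so Poincar\'e duality forces $v_{i}=0$ for $i\geq 2$; moreover $v_{1}=w_{1}(TY)=0$ because $Y$ is orientable. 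Hence $w_{2}(TY)=v_{2}+\mathrm{Sq}^{1}v_{1}=0$. Equivalently, $Y$ admits a spin structure, so $F$ lifts to a principal $\mathrm{Spin}(3)\cong S^{3}$-bundle $P\to Y$, and since $S^{3}$ is $2$-connected while $\dim Y=3$, a section of $P$ exists with no obstruction at all; its projection to $F$ gives the desired global orthonormal frame $(X_{1},X_{2},X_{3})$, which is in particular linearly independent at every point.

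The main obstacle is exactly the vanishing $w_{2}(TY)=0$: this is what makes the statement special to low dimensions, and the cleanest route to it is Wu's formula together with the fact that $\mathrm{Sq}^{i}$ raises degree by $i$ and so kills classes of degree $<i$. Everything else --- choosing a triangulation, identifying framings with sections of the frame bundle, and running the obstruction-theoretic argument --- is routine once one records $\pi_{0}(SO(3))=\pi_{2}(SO(3))=0$ (or, in the spin reformulation, the $2$-connectedness of $S^{3}$).
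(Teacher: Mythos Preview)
Your argument is correct and follows exactly the outline the paper gives after stating the theorem: reduce via obstruction theory to the vanishing of $w_2(TY)$ (using $\pi_0(SO(3))=\pi_2(SO(3))=0$), and then establish $w_2=0$. The paper does not spell out a proof but refers to \cite[Ch.~12]{MS}; you have supplied the standard Wu-class computation for that step, so your proposal is essentially the same approach, just with the details filled in.
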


The hardest part of the standard proofs of such result is to establish that the second Stiefel-Whitney class $w_2(TM)$ vanishes; after this, it follows from obstruction theory because $\pi_2(\mathrm{SO}(3))=0$ (see \cite[Ch. 12]{MS}). For alternative `bare hands' proofs, see \cite{BL}.
\par
It is natural to ask whether, in the presence of an additional geometric structure on $Y$, the framing can be chosen to be compatible with it. In this direction, we have the following result of Gromov.
\begin{thm}[\cite{Gro}, p. 182]\label{gromov} Every closed orientable three-manifold $Y$ equipped with a volume form $\Omega$ admits a framing $X_1,X_2$ and $X_3$ consisting of divergence-free vector fields.
\end{thm}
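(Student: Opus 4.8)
The plan is to deduce Theorem~\ref{gromov} from the following sharper \emph{metric} statement, which is the one to prove directly: \emph{for any Riemannian metric $g$ on $Y$ there exist three divergence-free vector fields which at every point are pairwise orthogonal and of the same non-zero length}. This yields Theorem~\ref{gromov}: given a volume form $\Omega$, Moser's theorem produces a metric $g$ with $dV_g=\Omega$ (rescale an arbitrary metric to total volume $\int_Y\Omega$, then pull the metric back by a Moser diffeomorphism), and the divergence of a vector field depends only on the volume form, so $\operatorname{div}_\Omega=\operatorname{div}_g$. The relevant spin geometry is as follows. An orientable $Y$ is spin; fix a spin structure and let $S\to Y$ be the spinor bundle (complex rank two), with Clifford multiplication $c\colon TY\xrightarrow{\ \sim\ }\mathfrak{su}(S)$ and Dirac operator $D=\sum_i c(e_i)\nabla_{e_i}$. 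Crucially, the spin representation of $\mathrm{Spin}(3)=\mathrm{Sp}(1)$ is quaternionic, so $S$ carries a parallel $\mathbb{C}$-antilinear automorphism $j_S$ with $j_S^2=-1$ commuting with $c$ and $\nabla$, hence with $D$; together with $I=i\cdot\mathrm{id}$ and $K=ij_S$ it makes $S$ a bundle of $\mathbb{H}$-modules compatibly with all of this.

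To a nowhere-vanishing spinor $\psi$ I would attach three vector fields via the \emph{squaring map}. Associate to $\psi$ the real $1$-form $\eta_\psi(X)=\operatorname{Im}\langle c(X)\psi,\psi\rangle$; its metric dual $V_\psi$ is $c^{-1}$ of the traceless part of $i\psi\psi^{*}$, which in the quaternionic picture is ``$\psi\, i\,\bar\psi$'', of length $|\psi|^2$. Applying the same recipe to the spinors obtained from $\psi$ through the $\mathbb{H}$-action — concretely, replacing $\psi$ by $\psi\cdot u_a$ for fixed unit quaternions with $u_a i u_a^{-1}=i,j,k$ (so $u_1=1$) — produces vector fields $V^{(1)}_\psi, V^{(2)}_\psi, V^{(3)}_\psi$ corresponding to ``$\psi i\bar\psi$'', ``$\psi j\bar\psi$'', ``$\psi k\bar\psi$''. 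Since conjugation by $\psi/|\psi|$ is a rotation, at each point these three vectors are pairwise orthogonal and all of length $|\psi|^2$, so if $\psi$ is nowhere zero they form a framing of $Y$.

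Next I would compute divergences. An integration by parts gives the classical identity $d^{*}\eta_\psi=2\operatorname{Im}\langle D\psi,\psi\rangle$, and because $D$ commutes with the $\mathbb{H}$-action this propagates to $\operatorname{div}V^{(a)}_\psi=2\langle D\psi,\iota_a\psi\rangle_{\mathbb{R}}$ with $\iota_1\psi=i\psi$, $\iota_2\psi=j_S\psi$, $\iota_3\psi=ij_S\psi$. Hence all three vector fields are divergence-free as soon as $D\psi=\lambda\psi$: the eigenvalue is automatically real, so $\langle D\psi,\psi\rangle=\lambda|\psi|^2\in\mathbb{R}$, while $\langle D\psi,j_S\psi\rangle=\lambda\langle\psi,j_S\psi\rangle=0$ because a spinor is orthogonal to its image under a quaternionic structure. (It is in fact enough that $D\psi$ be pointwise a real multiple of $\psi$.) Everything therefore reduces to producing a \emph{nowhere-vanishing eigenspinor} of $D$, for a suitable metric with volume form $\Omega$.

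This last point is the crux, and the step I expect to be the main obstacle. Two observations organize it. First, $j_S$ forces every eigenspace of $D$ to be quaternionic, hence of even complex dimension and so at least $2$-dimensional; and on a $2$-dimensional eigenspace $W$ the evaluation $W\to S_p$ at a point $p$ is $\mathbb{H}$-linear, hence an isomorphism or zero, so $W$ contains a nowhere-vanishing eigenspinor exactly when no $p\in Y$ lies in the common zero set $\{p:\psi(p)=0\ \forall\psi\in W\}$ — equivalently, when a chosen quaternionic generator of $W$ is nowhere zero. Second, that generator is a section of the real rank-four bundle $S$ over the $3$-manifold $Y$ constrained by a Dirac-type equation, so its zero set has expected codimension four and should be removable by a generic perturbation. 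I would make this precise by a transversality argument, perturbing either the metric (within the class with volume form $\Omega$) or — more cleanly, leaving $g$ untouched — the operator $D$ to $D+f$ for a real function $f$, since then $D\psi$ is a real multiple of $\psi$ and the three vector fields remain divergence-free. The delicate input is that the derivative of the (normalized) eigenspinor with respect to such a perturbation is surjective onto $S_p$ at every $p$; this rests on unique continuation for Dirac-type operators — an eigenspinor cannot vanish to infinite order — together with an analysis of the associated Green's function, and is the hardest part of the argument.
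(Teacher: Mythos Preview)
Your approach is essentially the paper's: reduce to the metric statement, exploit the quaternionic structure on the spinor bundle to turn one eigenspinor into three, apply the quadratic map $\Psi\mapsto\rho^{-1}(i(\Psi\Psi^*)_0)$ to obtain three orthogonal equal-length vector fields, and verify from the Dirac equation that they are divergence-free. The only substantive difference is in the hard step you correctly isolate --- producing a nowhere-vanishing eigenspinor: rather than carrying out the transversality argument you outline, the paper quotes the genericity theorems of Dahl and Hermann (for a generic metric \emph{conformal} to $g$, all nonzero eigenvalues are simple with nowhere-vanishing eigenspinors) and then fixes up the volume form afterwards by rescaling the resulting vector fields by $1/f^3$, where $g=f^2g'$.
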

Recall that the divergence $\mathrm{div}(X)$ of a vector field $X$ (with respect to the volume form $\Omega$) is defined in terms of the Lie derivative by
\begin{equation*}
\mathcal{L}_X\Omega=\mathrm{div}(X)\cdot\Omega;
\end{equation*}
a vector field is \textit{divergence-free} if its divergence vanishes, or equivalently if its associated flow is volume-preserving.
\par
If one fixes a Riemannian metric $g$ on $Y$ (and considers the volume form $d\mathrm{vol}_g$), the following is a very natural question with implications in hyperk\"ahler geometry due to Bryant (see also \cite{FLS}).
\begin{question}[\cite{Bry}, Remark 3]
Which closed orientable Riemannian three manifolds $(Y,g)$ admit a divergence-free framing $X_1,X_2$ and $X_3$ which is orthonormal at every point?
\end{question}

Our main goal is to show that if one relaxes the condition of orthonormality to orthonormality up to scaling then such a framing can always be found.

\begin{thm}\label{main}Every closed orientable three-manifold $Y$ equipped with a Riemannian metric $g$ admits a framing $X_1,X_2$ and $X_3$ consisting of divergence-free vector fields so that at every point $p$ in $Y$, $X_1(p),X_2(p)$ and $X_3(p)$ are orthogonal and have the same length.
\end{thm}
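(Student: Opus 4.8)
The plan is to extract the framing from a single nowhere-vanishing Dirac eigenspinor, using the three-dimensional fact that a nonzero spinor determines, by pointwise linear algebra alone, three orthogonal tangent vectors of equal length. Since $Y$ is orientable it carries a spin structure; fix one, let $S\to Y$ be the associated rank-two Hermitian spinor bundle, and let $\rho\colon TY\to\mathfrak{su}(S)$ be Clifford multiplication. It is a parallel isomorphism onto the bundle of traceless skew-Hermitian endomorphisms of $S$, so it identifies the Levi--Civita connection on $TY$ with the connection it induces on $\mathfrak{su}(S)$; recall also that $S$ carries a parallel quaternionic structure $j$. For every real-valued $V\in C^{\infty}(Y)$ the operator $D+V=\rho\circ\nabla+V$ is self-adjoint and elliptic, with real, discrete and unbounded spectrum.

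Suppose for now that $\phi$ is nowhere vanishing and satisfies $(D+V)\phi=\lambda\phi$. Since $\mathrm{Spin}(3)=\mathrm{SU}(2)$ acts simply transitively on unit spinors, $\phi/|\phi|$ trivializes the orthonormal frame bundle of $TY$; concretely, one forms three sections of $\mathfrak{su}(S)$ bilinear in $\phi$ and $j\phi$ (essentially the traceless parts of $\phi\phi^{*}$ and of $\phi(j\phi)^{*}\pm(j\phi)\phi^{*}$) and carries them to $TY$ by $\rho^{-1}$. The resulting vector fields $X_{1},X_{2},X_{3}$ are, at every point, orthogonal and of common length proportional to $|\phi|^{2}>0$, hence form a framing. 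For divergence-freeness I would expand $\mathrm{div}(X_{i})$ using that $\rho$ and $j$ are parallel: the first derivatives of $\phi$ reorganize into the Dirac operator, so that $\mathrm{div}(X_{i})\,d\mathrm{vol}_{g}$ becomes a fixed multiple of $\mathrm{Im}\langle D\phi,\xi_{i}\rangle\,d\mathrm{vol}_{g}$ with $\xi_{1}=j\phi$, $\xi_{2}=ij\phi$ and $\xi_{3}=\phi$ (up to the evident cyclic symmetry of the construction). Since $D\phi=(\lambda-V)\phi$ with $\lambda-V$ real-valued, and since $\langle\phi,\phi\rangle$ is real while $\langle\phi,j\phi\rangle=0$, each such imaginary part vanishes identically, so the three vector fields are divergence-free with respect to $d\mathrm{vol}_{g}$. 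That $V$ is real-valued is essential here: perturbing the spinorial connection instead would spoil this.

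The crux is thus to produce a nowhere-vanishing $\phi$ of this kind, and this is the step I expect to carry the real weight. Morally there is no obstruction: $\phi$ is a section of a real rank-four bundle over a three-manifold, so its zero set has expected dimension $3-4<0$, i.e.\ should be empty. To make this rigorous I would combine Aronszajn's unique continuation theorem (so that the zero set has empty interior) and B\"ar's bound on the Hausdorff dimension of zero sets of solutions of first-order elliptic systems with a Sard--Smale argument over the perturbation $V$: for $V$ in a residual set the lowest positive eigenvalue is simple with unit eigenspinor $\phi_{V}$ depending smoothly on $V$, the universal locus $\{(V,p):\phi_{V}(p)=0\}\subset C^{\infty}(Y;\mathbb{R})\times Y$ is a Banach manifold, and the projection to potentials is Fredholm of index $3-4=-1$, whence the set of $V$ with a vanishing eigenspinor is meager. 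The technical heart — the point I would expect to be delicate — is the transversality underlying this, namely that the linearization of $V\mapsto\phi_{V}(p)$ together with the variation in $p$ spans $S_{p}$; this reduces to a unique continuation statement for the formal adjoint. Should the real potentials give too constrained a linearization, one can instead perturb the metric within its conformal class — still divergence-compatible, after rescaling the framing by a power of the conformal factor — whose linearization contains a $\rho(df)$ term and is visibly rich enough.

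Assembling these ingredients gives the framing asserted in the theorem; and since every volume form on $Y$ equals $d\mathrm{vol}_{g}$ for a suitable Riemannian metric $g$, this simultaneously yields a new proof of Gromov's Theorem~\ref{gromov}.
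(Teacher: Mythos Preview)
Your strategy coincides with the paper's: extract three orthogonal equal-length vector fields from a single nowhere-vanishing Dirac eigenspinor via the quaternionic structure, and verify divergence-freeness from the eigenvalue equation. The paper phrases the framing construction as applying the quadratic map $\Psi\mapsto\rho^{-1}\bigl(i(\Psi\Psi^*)_0\bigr)$ to the three eigenspinors $\Psi$, $\Psi\cdot\tfrac{1+k}{\sqrt 2}$, $\Psi\cdot\tfrac{1+j}{\sqrt 2}$ rather than to bilinear combinations of $\phi$ and $j\phi$, but the two descriptions are equivalent; and your conceptual divergence computation is carried out explicitly in the paper's Lemma~\ref{closed} (whose subsequent Remark already records that any real-valued function in place of $\lambda$ suffices, which covers your $D\phi=(\lambda-V)\phi$).

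The one substantive difference is how the nowhere-vanishing eigenspinor is obtained. The paper does not attempt a Sard--Smale argument over real scalar potentials; it perturbs the metric within its conformal class and invokes the established results of Dahl and Hermann (Theorem~\ref{generic}) that for generic $g'$ conformal to $g$ all nonzero eigenvalues are quaternionically simple with nowhere-vanishing eigenspinors, then rescales the resulting $g'$-divergence-free framing by $f^{-3}$ (where $g=f^{2}g'$) to make it $g$-divergence-free via Cartan's formula. Your potential-perturbation variant is appealing because it would avoid that last rescaling, but the transversality step you yourself flag as delicate is not in the literature and you do not prove it; your stated fallback to conformal perturbations is exactly the paper's method, so in the end the two arguments agree.
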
 

This recovers Gromov's result because any volume form $\Omega$ is the volume form of some Riemannian metric. While Gromov's proof is based on h-principles and in particular convex integration techniques (see also \cite[Chapter 20]{EM} for an exposition), our approach is inspired by Seiberg-Witten theory and is based on elliptic PDEs, in the sense that it uses geometric properties of eigenspinors in dimension three. It is not clear whether the convex integration approach can be adapted to prove Theorem \ref{main}; notice that the geometric setup of our result is much more rigid because it involves three differential equations in four (rather than nine) variables.

\subsection*{Preliminaries on spin Dirac operators. }We begin by recalling some basic facts in spin geometry; we refer the reader to \cite{Roe} for a general discussion and \cite{KM} for a treatment specific for our three-dimensional needs. We will begin by choosing a spin structure on $Y$, which exists because $TY$ is trivial (Theorem \ref{parallelizable} above). Now $\mathrm{Spin(3)}=\mathrm{SU}(2)$, and the spinor representation is given by the natural vector representation on $\mathbb{C}^2$. We denote the associated (rank $2$ hermitian) spinor bundle by $S\rightarrow Y$; this is equipped with the spin connection $\nabla$. The associated Clifford multiplication provides an identification
\begin{equation*}
\rho: TY\rightarrow \mathfrak{su}(S)
\end{equation*}
such that for each oriented orthonormal frame $e_1,e_2,e_3$ at a point $p$, we can find a basis of $S_p$ such that $\rho(e_i)=\sigma_i$ where
\begin{equation}\label{pauli}
\sigma_1=\begin{bmatrix}
i&0\\
0&-i
\end{bmatrix},\quad
\sigma_2=\begin{bmatrix}
0&-1\\
1&0
\end{bmatrix},\quad
\sigma_3=\begin{bmatrix}
0&i\\
i&0
\end{bmatrix},
\end{equation}
are the Pauli matrices. The spin Dirac operator
\begin{equation*}
D:\Gamma(S)\rightarrow\Gamma(S)
\end{equation*}
is given by the composition
\begin{equation*}
\Gamma(S)\stackrel{\nabla}{\longrightarrow}\Gamma(T^*Y\otimes S)\stackrel{\rho}{\rightarrow}\Gamma(S)
\end{equation*}
where we extended $\rho$ to $1$-forms via the musical isomorphism. The spin Dirac operator is a first-order elliptic formally self-adjoint operator, and therefore (given that $Y$ is closed) diagonalizable in $L^2$ with real discrete spectrum infinite in both directions. We will be particularly interested in its eigenspinors, i.e. non-zero solutions to the eigenvalue equation
\begin{equation}\label{eigen}
D\Psi=\lambda\Psi,
\end{equation}
especially in the situation of $\lambda\neq 0$.

\subsection*{The quadratic map.} Inspired by the three-dimensional Seiberg-Witten equations, given any section $\Psi\in\Gamma(S)$ we can consider the traceless hermitian endomorphism $(\Psi\Psi^*)_0\in\Gamma(i\mathfrak{su}(S))$. In coordinates, if $\Psi=(\alpha,\beta)$, then
\begin{equation*}
(\Psi\Psi^*)_0=\begin{bmatrix}
\frac{1}{2}(|\alpha|^2-|\beta|^2)& \alpha\bar{\beta}\\
\bar{\alpha}\beta&\frac{1}{2}(|\beta|^2-|\alpha|^2)
\end{bmatrix}.
\end{equation*}
The key computation (also inspired by the Seiberg-Witten equations) for our purposes is the following.
\begin{lemma}\label{closed}
If $\Psi$ is an eigenspinor, then the vector field $X:=\rho^{-1}(i(\Psi\Psi^*)_0)$ divergence-free.
\end{lemma}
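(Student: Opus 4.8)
The plan is to compute $\mathrm{div}(X)$ directly via the divergence theorem characterization: since $Y$ is closed, it suffices to show that $\int_Y \mathrm{div}(X)\, f \, d\mathrm{vol}_g = -\int_Y \langle X, \nabla f\rangle\, d\mathrm{vol}_g$ vanishes for... no, better to work pointwise. Let me instead compute $\mathrm{div}(X)$ at an arbitrary point $p$ using a synchronous orthonormal frame $e_1, e_2, e_3$ with $(\nabla e_i)_p = 0$, and the associated Pauli-matrix trivialization of $S$ from \eqref{pauli}. Writing $X = \sum_i X^i e_i$ with $X^i = \langle \rho^{-1}(i(\Psi\Psi^*)_0), e_i\rangle$, one has $X^i = \tfrac{1}{2}\mathrm{Re}\,\mathrm{tr}(i(\Psi\Psi^*)_0 \, \rho(e_i)^*)$ or, more usefully, $X^i$ is (a constant multiple of) $\langle \rho(e_i)\Psi, \Psi\rangle$ up to reality/sign conventions — the point being that $X$ is the vector field metric-dual to the $1$-form $\xi(v) = \langle \rho(v)\Psi, \Psi\rangle$. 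So the main identity to establish is $\mathrm{div}(X) = \sum_i e_i(X^i)$ at $p$, and then to expand $e_i \langle \rho(e_i)\Psi,\Psi\rangle$ using compatibility of $\nabla$ with $\rho$ and with the hermitian metric.

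The key step is the following computation at $p$ (omitting the frame sum where clear):
\begin{align*}
\mathrm{div}(X) &= \sum_i e_i\langle \rho(e_i)\Psi,\Psi\rangle = \sum_i \langle \rho(e_i)\nabla_{e_i}\Psi,\Psi\rangle + \sum_i \langle \rho(e_i)\Psi,\nabla_{e_i}\Psi\rangle \\
&= \langle D\Psi,\Psi\rangle + \sum_i \langle \rho(e_i)\Psi,\nabla_{e_i}\Psi\rangle,
\end{align*}
using $(\nabla_{e_i}\rho)(e_j) = \rho(\nabla_{e_i}e_j) = 0$ at $p$ and the definition $D = \sum_i \rho(e_i)\nabla_{e_i}$. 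For the second sum, since each $\rho(e_i) \in \mathfrak{su}(S)$ is skew-hermitian, $\langle \rho(e_i)\Psi,\nabla_{e_i}\Psi\rangle = -\langle \Psi,\rho(e_i)\nabla_{e_i}\Psi\rangle = -\overline{\langle \rho(e_i)\nabla_{e_i}\Psi,\Psi\rangle}$, so summing over $i$ gives $-\overline{\langle D\Psi,\Psi\rangle}$. Hence $\mathrm{div}(X) = \langle D\Psi,\Psi\rangle - \overline{\langle D\Psi,\Psi\rangle} = 2i\,\mathrm{Im}\langle D\Psi,\Psi\rangle$. (With the reality conventions making $\xi$ and hence $X$ genuinely real, this reads $\mathrm{div}(X) = -2\,\mathrm{Im}\langle D\Psi,\Psi\rangle$ or similar; the precise constant is immaterial.) Now invoke the eigenspinor hypothesis \eqref{eigen}: $\langle D\Psi,\Psi\rangle = \lambda\langle\Psi,\Psi\rangle = \lambda|\Psi|^2 \in \mathbb{R}$ because $\lambda$ is real, so $\mathrm{Im}\langle D\Psi,\Psi\rangle = 0$ and $\mathrm{div}(X) = 0$ at $p$. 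Since $p$ was arbitrary, $X$ is divergence-free.

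The main obstacle is purely bookkeeping: tracking the precise relationship between the endomorphism $i(\Psi\Psi^*)_0$, the dual $1$-form $\xi(v) = \langle\rho(v)\Psi,\Psi\rangle$, and the Clifford-multiplication normalization in \eqref{pauli}, so that reality and the numerical constant come out right — one should double-check using the explicit Pauli matrices that $\rho^{-1}(i(\Psi\Psi^*)_0)$ really is the metric dual of (a real multiple of) $\xi$, e.g. by verifying $\langle i(\Psi\Psi^*)_0, \sigma_k\rangle_{\mathfrak{su}}$ against $\mathrm{Re}\,\langle\sigma_k\Psi,\Psi\rangle$ for each $k$. A secondary point worth stating carefully is that $\mathrm{div}(X) = \sum_i e_i(X^i)$ holds at $p$ in a synchronous frame because the Christoffel symbols vanish there; this is where the choice of normal frame does its work, and it lets one avoid any global integration argument. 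No ellipticity or spectral theory is needed beyond the bare fact that eigenvalues of the formally self-adjoint operator $D$ are real.
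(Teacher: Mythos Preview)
Your proof is correct and takes a genuinely different route from the paper's. The paper writes $\Psi=(\alpha,\beta)$ in the Pauli trivialization, expresses $X$ via (\ref{quadratic}), writes the eigenvalue equation (\ref{eigen}) out as two coupled first-order equations in $\alpha,\beta$, and then computes the divergence at $p$ as an explicit sum of partial derivatives, regrouping terms by hand until the eigenvalue relations force cancellation. You instead identify $X^i$ (up to a real constant) with the purely imaginary quantity $\langle\rho(e_i)\Psi,\Psi\rangle$, and observe that in a synchronous frame
\[
\sum_i e_i\langle\rho(e_i)\Psi,\Psi\rangle=\langle D\Psi,\Psi\rangle-\overline{\langle D\Psi,\Psi\rangle}=2i\,\mathrm{Im}\langle D\Psi,\Psi\rangle,
\]
which vanishes as soon as $\langle D\Psi,\Psi\rangle$ is real. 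This is cleaner and makes transparent exactly what is used: only that $\lambda\in\mathbb{R}$ (so the extension to real-valued $\lambda$, and to spin$^c$ Dirac operators, noted in the paper's subsequent Remark is immediate from your formulation). The only loose end you flag yourself---pinning down the constant in $X^i=\tfrac{1}{2i}\langle\rho(e_i)\Psi,\Psi\rangle$---is a two-line check with the Pauli matrices (\ref{pauli}) and indeed comes out real, so the argument is complete once that is written down.
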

\begin{proof}
We will check that the statement holds at any fixed $p$ in $Y$. Fix a local orthonormal frame $e_i$, which we assume to be syncronous at $p$, i.e. $\nabla_{e_i}e_j(p)=0$. Using \ref{pauli}) see that
\begin{equation}\label{quadratic}
X=\rho^{-1}(i(\Psi\Psi^*)_0)=\frac{1}{2}(|\alpha^2|-|\beta|^2)e_1+\mathrm{Im}({\alpha}\bar{\beta})e_2+\mathrm{Re}({\alpha}\bar{\beta})e_3.
\end{equation}
In these coordinates, the eigenvalue equation (\ref{eigen}) is
\begin{align*}
i\alpha_1-\beta_2+i\beta_3&=\lambda\alpha\\
\alpha_2+i\alpha_3-i\beta_1&=\lambda\beta.
\end{align*}
Using that the frame is syncronous at $p$, we compute the divergence of $X$ at $p$ as follows:
\begin{align*}
\mathrm{Re}(\bar{\alpha}\alpha_1)-\mathrm{Re}(\bar{\beta}\beta_1)+\mathrm{Im}(\alpha_2\bar{\beta})+\mathrm{Im}(\alpha\bar{\beta}_2)+\mathrm{Re}(\alpha_3\bar{\beta})+\mathrm{Re}(\alpha\bar{\beta}_3)&=\\
\mathrm{Re}(\alpha_1\bar{\alpha})-\mathrm{Re}(\beta_1\bar{\beta})-\mathrm{Re}(i\alpha_2\bar{\beta})+\mathrm{Re}(i{\beta}_2\bar{\alpha})+\mathrm{Re}(\alpha_3\bar{\beta})+\mathrm{Re}({\beta}_3\bar{\alpha})&=\\
\mathrm{Re}\left((-\beta_1-i\alpha_2+\alpha_3)\bar{\beta}\right)+\mathrm{Re}\left((\alpha_1+i\beta_2+\beta_3)\bar{\alpha}\right)&=\\
\mathrm{Re}\left((-i\lambda\beta)\bar{\beta}\right)+\mathrm{Re}\left((-i\lambda\alpha)\bar{\alpha}\right)&=0
\end{align*}
where we used $\mathrm{Im}(z)=-\mathrm{Re}(iz)$ and that $\lambda$ is real.\end{proof}
\begin{remark}
The result is still true if we consider more generally spin$^c$ Dirac operators $D_B$ (as it is customary in Seiberg-Witten theory). Indeed, we performed the computation pointwise, and any spin$^c$ connection $B$ can be made into the spin connection at a point via a gauge transformation. Furthermore, we can also allow $\lambda$ to be any real valued function on $Y$.\end{remark}

\subsection*{The quaternionic structure.} A fundamental feature of the spin Dirac operator in three-dimensions is its additional quaternionic structure (see for example \cite[Ch. 5]{Lin} for more details). Namely, we can identify the spinor representation as
\begin{align*}
\mathbb{C}^2&\equiv\mathbb{H}\\
(v,w)&\mapsto v+jw
\end{align*}
and consider the right action of $\mathbb{H}$ by multiplication; in particular, complex scalars act as usual while the action of $j$ under identification is given by
\begin{equation*}
(v,w)\cdot j=(-\bar{w},\bar{v}).
\end{equation*}
This induces a complex antilinear map squaring to $-1$ on the spinor bundle $S\rightarrow Y$ (i.e. a quaternionic structure) which we still denote by $j$. The spin Dirac operator $D$ is compatible with this action in the sense that
\begin{equation*}
D(\Psi\cdot j)=(D\Psi)\cdot j.
\end{equation*}
In particular, its eigenspaces are naturally equipped with a quaternionic structure (hence are even dimensional as complex vector spaces). In what follows, we will say that an eigenvalue $D$ is \textit{simple} if the corresponding eigenspace is one dimensional over $\mathbb{H}$.

\subsection*{Geometry of eigenspinors.} With this in mind, we will now state the two main results \cite{Dah}, \cite{Her} about the geometry of eigenspinors on three-manifolds that will be fundamental for our purposes: informally speaking, for a generic metric the spin Dirac operator has no kernel and only simple eigenvalues; furthermore all eigenspinors are nowhere vanishing. Intuitively speaking, the latter should be expected as the spinor bundle $S\rightarrow Y$ has real rank $4$. Of course, the proof of such results is quite technical in nature as the Dirac operator depends on the metric in a complicated way. Furthermore, we will need the following more refined version for our purposes.

\begin{thm}[\cite{Dah}, \cite{Her}]\label{generic}
Consider a closed three-manifold $Y$ equipped with a Riemannian metric $g$ and a spin structure. Then for a generic metric $g'$ conformal to $g$, all non-zero eigenvalues of the spin Dirac operator are simple, and all eigenspinors corresponding to non-zero eigenvalues are nowhere vanishing.
\end{thm}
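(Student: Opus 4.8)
The plan is to regard the conformal class of $g$ as the Fr\'echet manifold $\{g_u=e^{2u}g : u\in C^\infty(Y;\mathbb{R})\}$ and to obtain the desired metrics as a residual --- hence dense --- subset, by a transversality (Sard--Smale) argument. The basic tool is the conformal covariance of the Dirac operator: under the canonical identification of the spinor bundles of conformally related metrics one has, in dimension three, $D_{g_u}\psi=e^{-2u}D(e^{u}\psi)$, and conjugating by a suitable power of $e^{u}$ to restore self-adjointness with respect to the \emph{fixed} $L^2(g)$-inner product produces the family
\begin{equation*}
\widehat{D}_u:=e^{-u/2}\,D\,e^{-u/2},
\end{equation*}
which is isospectral to $D_{g_u}$ (with multiplicities, and compatibly with the quaternionic structure, since $\widehat{D}_u$ commutes with $j$). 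Note that $\ker\widehat{D}_u\cong\ker D_{g_u}$ has dimension independent of $u$ --- a conformal invariant --- which is precisely why the statement concerns only non-zero eigenvalues. It thus suffices to show that for a residual set of $u$ every non-zero eigenvalue of $\widehat{D}_u$ is $\mathbb{H}$-simple and every corresponding eigenspinor is nowhere zero.

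\textbf{Simplicity of non-zero eigenvalues.} Fix $u$ and a non-zero eigenvalue $\lambda$ with eigenspace $E_\lambda$ of quaternionic dimension $m\geq 2$. Along the analytic path $u+tv$, Kato's perturbation theory describes the eigenvalues branching from $\lambda$ through those of the quadratic form $Q_v(\phi,\phi'):=\mathrm{Re}\,\langle\dot{\widehat{D}}\,\phi,\phi'\rangle_{L^2}$ on $E_\lambda$, where $\dot{\widehat{D}}$ is the derivative of $\widehat{D}_{u+tv}$ at $t=0$, namely $-\tfrac12(v\widehat{D}_u+\widehat{D}_u v)$; using $\widehat{D}_u\phi=\lambda\phi$ and self-adjointness one gets $Q_v(\phi,\phi')=-\lambda\int_Y v\,\mathrm{Re}\,\langle\phi,\phi'\rangle\,d\mathrm{vol}_g$. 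Since $\widehat{D}_u$ commutes with $j$, $Q_v$ is quaternionic-Hermitian and each of its eigenvalues has even complex multiplicity, so it is enough to produce, for each such $\lambda$, a $v$ making $Q_v$ non-scalar: $\lambda$ then splits, and one concludes by iterating over the finitely many non-simple non-zero eigenvalues in any bounded spectral window and over a countable exhaustion. To build such a $v$, pick an $\mathbb{H}$-orthonormal pair $\psi_1,\psi_2\in E_\lambda$. Because the spinor bundle has quaternionic rank one, any non-zero value $\psi_1(p)$ already spans $S_p$ over $\mathbb{H}$; hence one cannot have both $|\psi_1|^2\equiv|\psi_2|^2$ and $\mathrm{Re}\,\langle\psi_1,\psi_2 q\rangle\equiv 0$ for all $q\in\mathbb{H}$, since these would force $\psi_1$ and $\psi_2$ to be pointwise $\mathbb{H}$-orthogonal of equal length and thus both to vanish identically. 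Taking $v$ to be whichever of $|\psi_1|^2-|\psi_2|^2$ or $\mathrm{Re}\,\langle\psi_1,\psi_2 q\rangle$ is not identically zero makes $Q_v$ non-scalar; this is where the hypothesis $\lambda\neq 0$ enters crucially.

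\textbf{Nowhere vanishing.} Restrict now to the open set of conformal factors $u$ for which a given non-zero eigenvalue $\lambda$ is simple --- dense by the previous step --- with eigenspinor $\psi_u$, unique up to the right $\mathbb{H}$-action. I would realize $\{(u,p):\psi_u(p)=0\}$ as the zero locus of a section of an appropriate bundle over a Banach completion of the space of conformal factors times $Y$. Since $S$ has real rank $4$ while $\dim Y=3$, once this section is shown transverse to the zero section the associated parametrized zero set has negative expected fiber dimension, so Sard--Smale gives that $\psi_u$ is nowhere zero for a residual set of $u$; intersecting over all non-zero eigenvalues proves the theorem. The essential point is the surjectivity of the linearization: at a point with $\psi_u(p)=0$ --- where in fact all of $E_\lambda=\mathbb{H}\psi_u$ vanishes --- first-order perturbation theory simplifies to $\dot\psi(p)=\lambda\,\bigl[(\widehat{D}_u-\lambda)^{-1}\Pi^{\perp}(v\psi_u)\bigr](p)$, with $\Pi^{\perp}$ the $L^2$-projection off $E_\lambda$, and one must check this ranges over all of $S_p$ as $v$ varies. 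If not, pairing with a non-zero $\xi\in S_p$ and using the Schwartz kernel of $(\widehat{D}_u-\lambda)^{-1}\Pi^{\perp}$ turns the obstruction into a spinor $G$ (smooth off $p$, with the usual $|\cdot|^{-2}$ singularity there) satisfying $\langle\psi_u,G\rangle\equiv 0$ pointwise. On the open dense set $\{\psi_u\neq0\}$ --- dense because the zero set of a Dirac eigenspinor on a three-manifold has dimension at most one (a result of B\"ar) --- this forces $G$ to be a section of the complex line $\mathbb{C}\,\psi_u j$ (the orthogonal complement of $\psi_u$ in $S$); writing $G=g\,\psi_u j$ and substituting into the equation, the invertibility of Clifford multiplication by a non-zero covector forces $g$ to be locally constant, hence constant (the complement of a one-dimensional subset of a connected three-manifold is connected), so $G$ is a constant multiple of $\psi_u j$ --- contradicting its singularity at $p$.

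\textbf{The main obstacle.} The genuinely delicate part is the transversality input for the nowhere-vanishing statement: setting up the Sard--Smale machinery requires a judicious choice of Banach spaces and the verification of the relevant Fredholm and regularity properties, and the surjectivity argument above leans both on the fine structure of the Green's function of $\widehat{D}_u-\lambda$ in three dimensions and on B\"ar's unique-continuation-type bound for nodal sets. By contrast the simplicity statement is comparatively soft, the only real care being the bookkeeping that upgrades ``every non-zero eigenvalue can be split'' to ``generically all of them are simple''. The argument sketched here is one route to the results of \cite{Dah} and \cite{Her}.
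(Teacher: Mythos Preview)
The paper does not contain a proof of this theorem: it is quoted as a black-box result from the literature, with the conformal-class version extracted from Remark~1.3 of \cite{Dah} and Theorem~4.3 of \cite{Her}. There is therefore no argument in the paper to compare yours against beyond that attribution.

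That said, your sketch is a faithful reconstruction of the strategy in those references and is essentially sound. The conjugated operator $\widehat D_u=e^{-u/2}De^{-u/2}$ is the right object, and the first-variation formula $Q_v(\phi,\phi')=-\lambda\int_Y v\,\mathrm{Re}\langle\phi,\phi'\rangle\,d\mathrm{vol}_g$ is exactly what drives Dahl's splitting argument; your use of the fact that $S$ has quaternionic rank one to produce a $v$ making $Q_v$ non-scalar is the correct mechanism. For the nowhere-vanishing part, the Sard--Smale outline, the first-order formula $\dot\psi(p)=\lambda[(\widehat D_u-\lambda)^{-1}\Pi^\perp(v\psi_u)](p)$ (valid at a zero of $\psi_u$ because both $v\psi_u$ and its $E_\lambda$-component vanish there), and the Green-function contradiction leaning on B\"ar's codimension-two bound for Dirac nodal sets all match Herzlich's approach. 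The points you flag as delicate---the Banach-space setup and the passage from ``each non-simple eigenvalue can be split'' to ``residually all are simple''---are indeed where the technical work lies in the original papers, but there is no genuine gap in your outline.
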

It is important in the statement to focus on \textit{non-zero} eigenvalues, because the kernel of $D$ (i.e. the space of harmonic spinors) is conformally invariant \cite{Hit}. On the other hand, for a generic metric (not necessarily conformal to a given one) the kernel is trivial \cite{Mai}. Notice that while the main statements of \cite{Dah} and \cite{Her} concern the space of all metrics, the proof is based on a careful analysis of a given conformal class; in particular the result we stated consists of Remark 1.3 in \cite{Dah} and Theorem 4.3 in \cite{Her}. Finally, for our purposes we will only need the statement that non-harmonic eigenspinors have no zeroes, but we emphasized the role of simple eigenvalues as it is an assumption in its proof.

\subsection*{Proof of the main result}
Fix a spin structure and choose a metric $g'$ conformal to $g$ such that the conclusion of Theorem \ref{generic} holds, and consider an eigenspinor $\Psi'$ corresponding to an eigenvalue $\lambda'\neq0$. Using the quaternionic structure, we consider then the three $\lambda'$-eigenspinors
\begin{equation*}
\Psi'_1=\Psi',\quad \Psi_2'=\Psi'\cdot \frac{1+k}{\sqrt{2}},\quad \Psi'_3=\Psi'\cdot \frac{1+j}{\sqrt{2}}
\end{equation*}
all of which are nowhere vanishing (here $k=ij\in\mathbb{H})$. By Lemma \ref{closed} the quadratic map associates to them nowhere-vanishing vector fields $X'_1,X'_2,X'_3$ which are divergence-free (with respect to $d\mathrm{vol}_{g'}$). Furthermore, they are readily checked to be orthogonal and to have the same length with respect to $g'$ at every point. Indeed, we can identify $S_p'\equiv\mathbb{C}^2\equiv \mathbb{H}$ by setting
\begin{equation*}
\Psi'\equiv(a,0)\text{ and } \Psi'\cdot j\equiv(0,a) \text{ where }a=|\Psi'(p)|\in\mathbb{R}^{>0}.
\end{equation*}
This determines a $g'$-orthonormal basis of $T_pY$ (denoted by $\{e'_i\}$) via the identification (\ref{pauli}). Then we have that at the point we can identify the three spinors as
\begin{equation*}
\Psi'_1=(a,0),\quad \Psi'_2=(\frac{a}{\sqrt{2}},\frac{-ia}{\sqrt{2}}),\quad \Psi'_3=(\frac{a}{\sqrt{2}},\frac{a}{\sqrt{2}}),
\end{equation*}
which correspond via the quadratic map to the vectors
\begin{equation*}
X_1'=\frac{a^2}{2}e'_1,\quad X_2'=\frac{a^2}{2}e'_2, \quad X_3'=\frac{a^2}{2}e'_3
\end{equation*}
respectively. Finally, we can write $g=f^2 g'$ for some positive function $f$, and the vector fields
\begin{equation*}
X_i:=\frac{1}{f^3}X'_i
\end{equation*}
are divergence-free with respect to $d\mathrm{vol}_g=f^3\cdot d\mathrm{vol}_{g'}$ because by Cartan's formula
\begin{equation*}
\mathcal{L}_X\Omega=d(\iota_X\Omega)+\iota_Xd\Omega=d(\iota_X\Omega)
\end{equation*}
the vector field $X$ is divergence-free with respect to $\Omega$ if and only if the $2$-form $\iota_X\Omega$ is closed.

\begin{remark}Notice that the proofs of Theorems \ref{gromov} and \ref{main} both take as input Theorem \ref{parallelizable}. Indeed, a key ingredient in our proof is the existence of a spin structure, which is equivalent to $w_2(TY)=0$. On the other hand, Gromov's approach shows that any framing of $Y$ is homotopic (through framings) to a framing by divergence-free vector fields. It is an interesting question to understand which homotopy classes of framings admit representatives as in Theorem \ref{main}. Referring to \cite{KirM} for details, given a framing all other ones are classified up to homotopy by the set of homotopy classes $[Y,\mathrm{SO}(3)]$. To a homotopy class one can associate an element
\begin{equation*}
\mathrm{Hom}(\pi_1(Y),\pi_1(\mathrm{SO}(3)))=H^1(Y;\mathbb{Z}/2)
\end{equation*}
which corresponds to the underlying spin structure. Our proof shows that any spin structure admits a framing as in Theorem \ref{main}. On the other hand, the homotopy classes inducing the same spin structure form an affine space over
\begin{equation*}
H^3(Y;\pi_3(\mathrm{SO}(3)))=\mathbb{Z},
\end{equation*}
and it is not clear from our approach whether all of them can be realized. More in general, it is an interesting question to understand the topological features of eigenspinors on three-manifolds for generic metrics.
\end{remark}

\vspace{0.5cm}

\textit{Acknowledgements.} The author thanks Robert Bryant and Yakov Eliashberg for some useful comments. This work was partially supported by NSF grant DMS-2203498.

\bibliographystyle{alpha}
\bibliography{biblio}

\end{document}